\newtheorem{theorem}{Theorem}
\newtheorem{lemma}{Lemma}
\newtheorem{remark}{Remark}
\newtheorem{corollary}{Corollary}
\newtheorem{assumption}{Assumption}
\newtheorem{definition}{Definition}
\def\BibTeX{{\rm B\kern-.05em{\sc i\kern-.025em b}\kern-.08em
    T\kern-.1667em\lower.7ex\hbox{E}\kern-.125emX}}
\author{Mengmou Li, Masaaki~Nagahara
\thanks{The authors are with the Graduate School of Advanced Science and Engineering, Hiroshima University, Higashi-Hiroshima City, Japan (e-mail: mmli.research@gmail.com; nagahara@ieee.org).}
\thanks{This work is supported by JSPS KAKENHI under grant numbers 24K23864, 23K26130, 22KK0155, 22H00512, and 24K21314.}
}
\title{Exponential Convergence of Augmented Primal-dual Gradient Algorithms for Partially Strongly Convex Functions}
\begin{document}
\maketitle
\begin{abstract}
	We show that the augmented primal-dual gradient algorithms can achieve global exponential convergence with partially strongly convex functions. In particular, the objective function only needs to be strongly convex in the subspace satisfying the equality constraint and can be generally convex elsewhere, provided the global Lipschitz condition for the gradient is satisfied. This condition implies that states outside the equality subspace will converge towards it exponentially fast.
    The analysis is then applied to distributed optimization, where the partially strong convexity can be relaxed to the restricted secant inequality condition, which is not necessarily convex. This work unifies global exponential convergence results for some existing centralized and distributed algorithms.
\end{abstract}

\section{Introduction}
The convergence of optimization algorithms has been an important research focus due to their 
wide range of applications.
In particular, exponential convergence in continuous time, or linear convergence in discrete time, has attracted considerable attention recently.
Researchers have proposed various conditions alternative to strong convexity of the objective function to ensure exponential/linear convergence, such as the Polyak-\L{}ojasiewicz (PL) condition \cite{polyak1963gradient,lojasiewicz1963topological}, the error bound (EB) condition \cite{luo1993error}, the restricted secant inequality (RSI) condition \cite{zhang2015restricted}, and the quadratic growth (QG) condition \cite{anitescu2000degenerate}. Notably, these conditions may not even require convexity \cite{karimi2016linear}.

Convergence analysis becomes more complicated when constraints are introduced.
The augmented Lagrange multiplier method for handling equality constraints was introduced by \cite{hestenes1969multiplier, powell1969method}, and has been extensively studied since then \cite{luo2008convergence}.
Meanwhile, the exponential convergence has been of particular interest in recent years \cite{dhingra2018proximal,qu2019exponential,tang2020semi,chen2020exponential,wang2021exponential}.
Specifically, \cite{dhingra2018proximal} and \cite{qu2019exponential} address this issue for strongly convex functions with linear equality and inequality constraints, respectively. For general nonlinear inequality constraints, only the so-called semi-global exponential stability is obtained without considering constraints to be linearly independent \cite{tang2020semi}. Currently, it remains uncertain whether a global exponential stability condition can be established for augmented primal-dual gradient dynamics in problems involving general nonlinear inequality constraints. The work in \cite{wang2021exponential} extends exponential stability to non-smooth objective functions and general set constraints.
Moreover, its application in distributed optimization is also extensively studied \cite{liang2019exponential,li2020input,yi2020exponential}.

On the other hand, exponential convergence of primal-dual gradient algorithms by harnessing strong convexity from constraints is seldom considered. The most related work is \cite{chen2020exponential}, which addresses partially non-strongly convex problems but with decoupled objective functions and equality constraints. In this work, we aim to demonstrate explicitly how equality constraints can contribute to exponential convergence while relaxing the requirement for strong convexity.
Our theoretical results are shown by utilizing the methods of integral quadratic constraints (IQCs) \cite{megretski1997system} and the Kalman-Yakubovich-Popov (KYP) lemma \cite{rantzer1996kalman}. The proposed framework unifies and extends some existing results in both centralized and distributed optimization. Moreover, a comprehensive understanding of exponential convergence is provided in these contexts.

\text{Notation:} $\mathbb{R}$ and $\mathbb{C}$ represent the sets of real and complex numbers, respectively. $\mathbb{R}^{n}$ and $\mathbb{R}^{n \times m}$ denote the sets of $n$-dimensional real vectors and $n \times m$ real matrices, respectively.
The imaginary unit is denoted by $j$; the real part and imaginary part of a complex number are denoted by $\textup{Re}(\cdot)$ and $\textup{Im}(\cdot)$, respectively.
The Kronecker product is denoted by $\otimes$. The Euclidean norm is denoted by $\| \cdot \|$. Given symmetric matrices $A$ and $B$, $A \prec B$ means that $B- A$ is positive definite, and $\succ$, $\preceq$, $\succeq$ are defined \textit{mutatis mutandis}. The symbols $I_{n}$, $\mathbf{1_n}$, and $\mathbf{0}_{n \times m}$ represent $n\times n$ identity matrix, $n$-dimensional vector with all entries equal to one, and the $n \times m$ zero matrix, respectively, while their subscripts may be omitted when the dimensions are clear from the context.

\section{Preliminaries}
Consider the equality-constrained optimization problem 
\begin{equation}\label{eq:problem}
\begin{aligned}
	&\min_{x \in \mathbb{R}^{n}} f(x) \\
		& \text{subject to } Tx = b
\end{aligned}
\end{equation}
where the objective function $f(x): \mathbb{R}^{n} \rightarrow \mathbb{R}$ is differentiable, $T \in \mathbb{R}^{m \times n}$ with $m \leq n$, and $b\in \mathbb{R}^{m}$. The following assumptions are commonly considered.
\begin{assumption}\label{assumption general convex}
$f(x)$ is convex, i.e., for any $ x,~ x' \in \mathbb{R}^{n}$,
\begin{align*}
    \left( \nabla f(x) - \nabla f(x') \right)^\top (x - x') \geq 0.
\end{align*}
\end{assumption}
\begin{assumption}\label{assumption differentiable lips}
	$f(x)$ is $l$-Lipschitz smooth, i.e., there exists some constant $l > 0$, such that for any $x,~x' \in \mathbb{R}^{n}$,\begin{align*}
  \left( \nabla f(x) - \nabla f(x') \right)^\top (x - x') \leq l \| x - x' \|^2.
\end{align*}
\end{assumption}
\begin{assumption}\label{assumption constraint qualification}
	The matrix $T$ is of full row rank and satisfies $ \kappa_1 I_m \preceq T T^\top  \preceq \kappa_2 I_m$ for some $\kappa_2 \geq \kappa_1 > 0$.
\end{assumption}
Assumption~\ref{assumption constraint qualification} is known as the linear independence constraint qualification (LICQ) \cite{nocedal1999numerical}.
It has been shown in \cite{qu2019exponential} that a primal-dual gradient algorithm for \eqref{eq:problem} achieves global exponential convergence if $f(x)$ is strongly convex.
We aim to show that the objective function can be relaxed to partially strong convexity while ensuring global exponential convergence for augmented primal-dual algorithms.

In the following, we provide the definitions of partitioned vector and partially strong convexity, which will be used in this work.
\begin{definition}
Let $S$ be a subset of $\{1,2,\ldots,n\}$ and assume the number of entries in $S$ is $m\leq n$.
For $x\in\mathbb{R}^n$, the restriction of $x$ to $S$ is denoted by $x_{S}\in\mathbb{R}^m$,
which we call the partitioned vector of $x$ with $S$.
\end{definition}

\begin{definition}
The function $f(x)$ is called $\mu$-partially strongly convex if it is $\mu$-strongly convex with respect to the partitioned vector $x_S$ with some index set $S\subset\{1,2,\ldots,n\}$. If $f(x)$ is differentiable, the $\mu$-partially strong convexity is represented by
\begin{align*}
        \left( \nabla f(x) - \nabla f(x') \right)^\top (x - x') \geq \mu \|x_{S} - x'_{S} \|^2
\end{align*}
for any $x,~x' \in \mathbb{R}^{n}$, where $x_{S}$ and $x'_{S}$ are the partitioned vector of $x$ and $x'$ with $S$, respectively.
\end{definition}

\begin{remark}
A global $\mu$-strongly convex function is also partially strongly convex since
\begin{align*}
\left( \nabla f(x) - \nabla f(x') \right)^\top (x - x') \geq \mu \|x - x'\|^2
 \geq \mu \|x_{S} - x'_{S} \|^2.
 \end{align*}
\end{remark}
Let $Q \in \mathbb{R}^{n \times n}$, $R \in \mathbb{R}^{n \times m}$ be a QR decomposition of $T^\top $, i.e., $Q$ is an orthogonal matrix and $R$ is an upper triangular matrix such that
\begin{align}
	T^\top  = QR =
	\begin{bmatrix}
		Q_1 & Q_2
	\end{bmatrix}
	\begin{bmatrix}
		R_1 \\ \mathbf{0}
	\end{bmatrix}
	= Q_1 R_1
\end{align}
where $R_1 \in \mathbb{R}^{m \times m}$, $Q_1 \in \mathbb{R}^{n \times m}$ and $Q_2 \in \mathbb{R}^{n \times (n-m)}$. 
We relax the strong convexity requirement of $f(x)$ over the whole domain in the problem \eqref{eq:problem} to being strongly convex within the subspace satisfying the equality constraints. Let us define state $x'$ by the linear transformation $x = Qx'$ and consider the following assumption.
\begin{assumption}\label{assumption partially strongly convex}
When $m < n$, $g(x') := f(Qx')$ is $\mu$-partially strongly convex with respect to the partitioned vector $x'_{S}$ with $S=\{n-m+1,n-m+2,\ldots,n\}$,
namely, the vector $x'$ is partitioned as $x'=\left(x'_{S^c}, x'_{S} \right)$ where $S^c=\{1,2,\ldots,n-m\}$.
\end{assumption}
\begin{remark}
It can be easily shown that convexity is preserved under invertible linear transformation. 
Assumption~\ref{assumption partially strongly convex} is additionally imposed since the solution to $T x = T Q x' = \begin{bmatrix}
R_1^\top & \mathbf{0}
\end{bmatrix}
x' = b$ is irrelevant to the value of 
$x'_{S}$,
and thus $T$ has no effect on 
$x'_{S}$.
\end{remark}

Finally, we assume the existence of a solution to problem \eqref{eq:problem}:
\begin{assumption}\label{assumption feasible problem}
Problem \eqref{eq:problem} has a finite and feasible solution.
\end{assumption}

\section{Augmented Primal-dual algorithm}
The augmented Lagrangian for problem \eqref{eq:problem} is
\begin{align}\label{eq:augmented Lagrangian}
\mathcal{L} (x, \lambda)= f(x) + \lambda^\top (T x - b) + \frac{\alpha}{2} \|Tx - b\|^2
\end{align}
where $\lambda \in \mathbb{R}^{m}$ is the multiplier and $\alpha > 0$ is a penalty parameter. 
With this, the augmented primal-dual gradient algorithm for \eqref{eq:problem} is given by
\begin{equation}\label{eq:primal-dual algorithm}
\begin{aligned}
\dot{x} =& - \nabla f(x) - T^\top  \lambda - \alpha T^\top  (Tx - b)\\
	\dot{\lambda} =& T x - b.
\end{aligned}
\end{equation}
It is well-known that any equilibrium to the above dynamics satisfies the Karush–Kuhn–Tucker (KKT) conditions \cite{boyd2004convex} and is thus an optimal solution to the problem under Assumptions~\ref{assumption general convex}, \ref{assumption differentiable lips}, and \ref{assumption constraint qualification}.
In addition, exponential convergence of \eqref{eq:primal-dual algorithm} is guaranteed when $f(x)$ is strongly convex.
Nevertheless, we observe that when $f(x) = 0$, the error system of \eqref{eq:primal-dual algorithm} with respect to an equilibrium $(x^{*}, \lambda^{*})$ becomes
\begin{align}\label{eq:error system}
	\begin{bmatrix}
		\dot{\tilde{x}} \\ \dot{\tilde{\lambda}}
	\end{bmatrix}
	 = \begin{bmatrix}
	 	-\alpha T^\top  T & -T^\top \\
	 	T & \mathbf{0}
	 \end{bmatrix}
	 \begin{bmatrix}
		\tilde{x} \\ \tilde{\lambda}
	\end{bmatrix}
\end{align}
where $\tilde{x} = x - x^{*}$ and $\tilde{\lambda} = \lambda - \lambda^{*}$.
It can be shown that the system matrix in \eqref{eq:error system} is Hurwitz if $m = n$.

Let us first introduce the following lemma that shows the Hurwitz property for matrices in a specific form.
\begin{lemma}\label{lem:Hurwitz}
Consider the real matrix
\begin{align*}
A = \begin{bmatrix}
	 	- F & -T^\top \\
	 	T & \mathbf{0}
\end{bmatrix}
\end{align*}
where $F \in \mathbb{R}^{n \times n}$, $T \in \mathbb{R}^{m \times n}$, and $m \leq n$.
If $F \succ 0$, and $T$ is of full row rank, then $A$ is Hurwitz.
\end{lemma}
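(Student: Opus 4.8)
The plan is to show directly that every eigenvalue of $A$ lies strictly in the open left half-plane by an energy-type computation on the associated eigenvector. Let $s \in \mathbb{C}$ be an eigenvalue of $A$ with eigenvector $(u,v) \neq 0$, where $u \in \mathbb{C}^{n}$ and $v \in \mathbb{C}^{m}$, so that the two block rows of the eigenvalue equation read
\begin{align*}
	-Fu - T^\top v = s u, \qquad T u = s v.
\end{align*}
First I would left-multiply the first equation by the conjugate transpose $u^{*}$ and use the second to eliminate the cross term: since $T$ is real, $u^{*} T^\top = (Tu)^{*} = \bar{s}\, v^{*}$, and hence $u^{*} T^\top v = \bar{s}\,\|v\|^{2}$. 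This collapses the first block equation to the scalar identity
\begin{align*}
	-u^{*} F u - \bar{s}\,\|v\|^{2} = s\,\|u\|^{2}.
\end{align*}

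Next I would take real parts. Because the notation $F \succ 0$ is only defined for symmetric matrices, we have $F = F^\top$, so $u^{*} F u$ is real and nonnegative, and $\textup{Re}(\bar{s}) = \textup{Re}(s)$. The imaginary contributions therefore cancel, leaving
\begin{align*}
	-u^{*} F u = \textup{Re}(s)\,\bigl(\|u\|^{2} + \|v\|^{2}\bigr).
\end{align*}
Since $(u,v) \neq 0$, the factor $\|u\|^{2} + \|v\|^{2}$ is strictly positive, so $\textup{Re}(s) = -u^{*}Fu / (\|u\|^{2} + \|v\|^{2}) \leq 0$. This already confines the spectrum of $A$ to the closed left half-plane.

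The main obstacle, and the only point at which the full row rank of $T$ is needed, is upgrading this to a strict inequality, i.e. ruling out purely imaginary eigenvalues. I would argue by contradiction: if $\textup{Re}(s) = 0$, then the identity above forces $u^{*} F u = 0$, and positive definiteness of $F$ gives $u = 0$. Substituting $u = 0$ into the first block equation leaves $T^\top v = 0$; since $T$ is of full row rank, $T^\top$ has trivial kernel, whence $v = 0$. But then $(u,v) = 0$, contradicting that it is an eigenvector. Therefore $\textup{Re}(s) < 0$ for every eigenvalue of $A$, so $A$ is Hurwitz.

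I would also note that a purely Lyapunov-based route is available but less direct: one computes $A + A^\top = \mathrm{diag}(-2F,\, \mathbf{0}) \preceq 0$, which by itself only yields $\textup{Re}(s) \leq 0$, so an additional observability- or LaSalle-type argument invoking the rank of $T$ would still be required to exclude marginal modes. The eigenvector computation above packages both steps into one and makes the distinct roles of $F \succ 0$ (semi-stability) and of the rank condition on $T$ (strictness) transparent.
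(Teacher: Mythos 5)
Your proof is correct and takes essentially the same route as the paper: an eigenvector energy computation showing $\textup{Re}(s)\le 0$, followed by the same contradiction argument (if $\textup{Re}(s)=0$ then $F\succ 0$ forces $u=0$, and full row rank of $T$ forces $v=0$). The only cosmetic difference is that you eliminate the cross term via the second block equation, whereas the paper observes directly that $-\overline{x}^\top T^\top\lambda+\overline{\lambda}^\top Tx$ is purely imaginary.
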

\begin{proof}
Since $A + A^\top  \preceq 0$, all the eigenvalues $s$ of $A$ have non-positive real parts. Let the non-zero vector $z = (x,  \lambda)$ be an eigenvector of $A$ such that 
\begin{align}\label{eq:eigenvector}
	A  z = s  z
\end{align}
and $\overline{z} = \left(\overline{x},  \overline{\lambda}\right)$ be the complex conjugate to $ z$. Then we have
\begin{align*}
	\text{Re}\left(s \| z\|^2 \right) = & \text{Re} \left({\overline{z}}^\top  A z \right) \\
	=& \text{Re} \left( - \overline{x}^\top  F x - \overline{x}^\top  T^\top \lambda + \overline{\lambda}^\top  T x\right)\\
	= & \text{Re} \left( - \overline{x}^\top  F x - j \text{Im}\left( \overline{x}^\top  T^\top  \lambda \right) \right)\\
	= & \text{Re} \left( - \overline{x}^\top  F x \right).
\end{align*}
Assume that $\text{Re}(s) = 0$, then $ x = 0$ since $F \succ 0$. Substituting it into \eqref{eq:eigenvector}, we have
	$- T^\top \lambda = 0$. 
Recall that $T$ has full row rank, then $T^\top \lambda = 0$ implies that $\lambda = 0$, which contradicts that the eigenvector $ z = (x,  \lambda) \neq 0$. Therefore, $\text{Re}(s) < 0$, and thus $A$ is Hurwitz.
\end{proof}
This result shows that when $m = n$, exponential convergence for system \eqref{eq:error system} can be obtained by letting $F = T^{\top} T$ in Lemma~\ref{lem:Hurwitz}, without any requirement on the strong convexity. This implies that we only need partially strong convexity as stated in Assumption~\ref{assumption partially strongly convex} to enforce exponential convergence of the states ``irrelevant'' to the equality subspace when $m \leq n$.

\begin{remark}
We consider the augmented Lagrangian in this work because the standard one fails to guarantee exponential convergence without the penalty term. The standard primal-dual gradient algorithm to solve problem \eqref{eq:problem} is given by
\begin{align*}
	\dot{x} =& - \nabla f(x) - T^\top  \lambda \\
	\dot{\lambda} =& Tx - b.
\end{align*}
The objective function becomes irrelevant to the optimal solution when $m = n$. However, when it is affine linear, i.e., $f(x) = Fx + c$, the error system becomes 
\begin{align}\label{eq:classic error system}
	\begin{bmatrix}
		\dot{\tilde{x}} \\ \dot{\tilde{\lambda}}
	\end{bmatrix}
	 = \begin{bmatrix}
	 	\mathbf{0} &  -T^\top \\ T &\mathbf{0}
	 \end{bmatrix}
	 \begin{bmatrix}
		\tilde{x} \\ \tilde{\lambda}
	\end{bmatrix}.
\end{align}
The system trajectories of \eqref{eq:classic error system} keep oscillating without convergence \cite{yamashita2020passivity,li2022parallel}.
\end{remark}

\section{Exponential convergence by IQC}
\begin{theorem}\label{thm exponential convergence}
	Under Assumptions~\ref{assumption general convex}, \ref{assumption differentiable lips}, \ref{assumption constraint qualification}, \ref{assumption partially strongly convex} and \ref{assumption feasible problem}, the variable $x(t)$ in the augmented primal-dual gradient algorithm \eqref{eq:primal-dual algorithm} exponentially converges to the optimal solution of problem \eqref{eq:problem}.
\end{theorem}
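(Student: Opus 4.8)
The plan is to recast the theorem as an absolute-stability problem and certify a geometric decay rate through a quadratic storage function supplied by the KYP lemma. First I would fix the target equilibrium. By Assumptions~\ref{assumption general convex}, \ref{assumption constraint qualification} and \ref{assumption feasible problem} the KKT system $\nabla f(x^{*}) + T^\top \lambda^{*} = 0$, $T x^{*} = b$ admits a solution; moreover $x^{*}$ is the \emph{unique} feasible minimizer, since on the feasible affine set $\{x : Tx = b\}$, parametrized through $x = Qx'$ by the free coordinate $x'_{S}$, the objective is strongly convex in that coordinate by Assumption~\ref{assumption partially strongly convex}, while $\lambda^{*}$ is unique because $T$ has full row rank. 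Writing $\tilde{x} = x - x^{*}$, $\tilde{\lambda} = \lambda - \lambda^{*}$ and $\phi := \nabla f(x) - \nabla f(x^{*})$, the dynamics \eqref{eq:primal-dual algorithm} take the Lur'e form
\begin{align*}
\dot{\tilde{x}} = -\alpha T^\top T\, \tilde{x} - T^\top \tilde{\lambda} - \phi, \qquad \dot{\tilde{\lambda}} = T \tilde{x},
\end{align*}
i.e. the linear interconnection \eqref{eq:error system} with $F = \alpha T^\top T$ driven by the static feedback signal $\phi$, with $\tilde{x}$ as the output.

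Next I would list the quadratic constraints obeyed by $\phi$, which play the role of the IQCs. Convexity (Assumption~\ref{assumption general convex}) gives $\phi^\top \tilde{x} \ge 0$; Assumption~\ref{assumption differentiable lips} together with convexity gives the co-coercivity estimate, in particular $\|\phi\| \le l \|\tilde{x}\|$; and, after the orthogonal change of variables $x = Q x'$, Assumption~\ref{assumption partially strongly convex} yields the decisive \emph{subspace} estimate $\phi^\top \tilde{x} \ge \mu \| Q_2^\top \tilde{x}\|^2$, i.e. strong convexity only along $\ker T = \mathrm{range}(Q_2)$. The geometric crux is that $T Q_2 = \mathbf{0}$, so $\mathrm{range}(T^\top)$ and $\ker T$ are complementary: the penalty $-\alpha T^\top T$ is coercive precisely on the former (the ``constrained'' directions), where $\| T \tilde{x}\|^2 \ge \kappa_1 \|Q_1^\top \tilde{x}\|^2$, while $\phi$ is coercive on the latter (the ``free'' directions). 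Together they dominate the whole of $\tilde{x}$, which is exactly the mechanism that Lemma~\ref{lem:Hurwitz} isolates for the pure linear block.

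With these constraints I would invoke the KYP lemma to produce $P \succ 0$ and a rate $\rho > 0$ solving the associated S-procedure LMI, equivalently the storage function $V(\tilde{x}, \tilde{\lambda}) = \tfrac{1}{2}\|\tilde{x}\|^2 + \tfrac{1}{2}\|\tilde{\lambda}\|^2 + \epsilon\, \tilde{x}^\top T^\top \tilde{\lambda}$ for a small $\epsilon > 0$. A direct computation gives $\dot V = -\phi^\top \tilde{x} - \alpha\|T\tilde{x}\|^2 + \epsilon\tfrac{d}{dt}(\tilde{x}^\top T^\top\tilde{\lambda})$, whose leading part is bounded by $-\min(\mu,\alpha\kappa_1)\|\tilde{x}\|^2$ using the two coercivity estimates above, while the cross term contributes the negative definite piece $-\epsilon\|T^\top \tilde{\lambda}\|^2 \le -\epsilon\kappa_1\|\tilde{\lambda}\|^2$ that is absent from the uncoupled Lyapunov function. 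Choosing $\epsilon$ small enough keeps $V$ positive definite and, after absorbing the remaining indefinite cross terms, yields $\dot V \le -2\rho V$, hence $\|(\tilde{x},\tilde{\lambda})\|$ and in particular $x(t)$ converge to the optimum exponentially.

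The step I expect to be the main obstacle is the last one: dominating the indefinite terms $-\epsilon\phi^\top T^\top \tilde{\lambda}$ and $-\epsilon\alpha\tilde{x}^\top T^\top T T^\top \tilde{\lambda}$ (equivalently, proving feasibility of the LMI) when only \emph{subspace-restricted} coercivity is available. This is where $\|\phi\| \le l\|\tilde{x}\|$ and the spectral bounds $\kappa_1 I \preceq T T^\top \preceq \kappa_2 I$ enter, and where $\epsilon$ must be tuned against $\alpha, \mu, l, \kappa_1, \kappa_2$ via Young's inequality so that the coefficients of both $\|\tilde{x}\|^2$ and $\|\tilde{\lambda}\|^2$ in $\dot V$ stay strictly negative. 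Unlike the globally strongly convex case, neither the penalty nor the convexity alone controls all of $\tilde{x}$, so the splitting into constrained and free directions, together with the verification that the skew coupling does not destroy coercivity on the dual variable, is the essential technical content.
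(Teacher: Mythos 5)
Your proposal is correct, and the mechanism you isolate is exactly the one the paper relies on: the penalty $-\alpha T^\top T$ is coercive on $\mathrm{range}(T^\top)=\mathrm{range}(Q_1)$ (via $\|T\tilde{x}\|^2\ge\kappa_1\|Q_1^\top\tilde{x}\|^2$), while Assumption~\ref{assumption partially strongly convex} supplies $\phi^\top\tilde{x}\ge\mu\|Q_2^\top\tilde{x}\|^2$ on the complementary subspace $\ker T=\mathrm{range}(Q_2)$, and the two together dominate $\|\tilde{x}\|^2$ with constant $\min(\mu,\alpha\kappa_1)$. The execution, however, is genuinely different from the paper's. The paper stays in the IQC/frequency-domain framework throughout: it verifies the co-coercivity IQC, shows the linear block $(A,B,-C)$ is passive with the \emph{uncoupled} storage $\tfrac12\|\tilde{x}\|^2+\tfrac12\|\tilde{\lambda}\|^2$, invokes Lemma~\ref{lem:Hurwitz} for Hurwitzness, and concludes feasibility of the shifted condition \eqref{eq:IFP} for some unspecified small $\rho>0$ by an analyticity/perturbation argument; moreover it splits into the cases $m=n$ and $m<n$, and in the latter it absorbs the $-\mu I$ term into the linear dynamics to produce a Hurwitz $\mathcal{A}$ before reapplying the same machinery. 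You instead exhibit an explicit certificate — the cross-term Lyapunov function $V=\tfrac12\|\tilde{x}\|^2+\tfrac12\|\tilde{\lambda}\|^2+\epsilon\,\tilde{x}^\top T^\top\tilde{\lambda}$ — and carry out the time-domain domination directly, treating $m\le n$ uniformly through the $Q_1/Q_2$ splitting. Your bookkeeping checks out: the $O(1)$ skew terms cancel, the cross term contributes $-\epsilon\|T^\top\tilde{\lambda}\|^2\le-\epsilon\kappa_1\|\tilde{\lambda}\|^2$, and the residual terms $\epsilon\|T\tilde{x}\|^2$, $-\epsilon\alpha\tilde{x}^\top T^\top TT^\top\tilde{\lambda}$, $-\epsilon\phi^\top T^\top\tilde{\lambda}$ are all $O(\epsilon)\|\tilde{x}\|(\|\tilde{x}\|+\|\tilde{\lambda}\|)$ by $\|\phi\|\le l\|\tilde{x}\|$ and $TT^\top\preceq\kappa_2 I$, so a Young's-inequality split with a weight chosen independently of $\epsilon$, followed by taking $\epsilon$ small, leaves both diagonal coefficients strictly negative. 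What your route buys is constructiveness — an explicit $P$ and, in principle, an explicit rate in terms of $\mu,\alpha,l,\kappa_1,\kappa_2$ — without citing the exponential-IQC theorem or the KYP lemma; what the paper's route buys is modularity (additional IQCs can be appended without redoing the tuning) at the cost of a non-constructive rate. One caveat: you read Assumption~\ref{assumption partially strongly convex} as strong convexity along $\ker T$, i.e.\ in the last $n-m$ coordinates of $x'$; this is the intended reading (it is the one consistent with the Remark following the assumption and with the structure of $\mathcal{A}$ in the paper's Step~2), even though the index set is written there with $m$ elements.
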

\begin{proof}
We utilize the method of IQCs \cite{megretski1997system} and the KYP lemma \cite{rantzer1996kalman} to prove exponential convergence of algorithm \eqref{eq:primal-dual algorithm}. 
The proof is divided into two steps.\\
\textbf{Step 1}: when $m = n$:\\
The error system dynamics can be written as
\begin{align}\label{eq:error system general}
& \dot{z} = A z + B u, \quad y = C z, \quad u = \Delta(y)
\end{align}
where $z = \left( \tilde{x}, \tilde{\lambda} \right)$,
\begin{align*}
A = \begin{bmatrix}
	 	 -T^\top  T & -T^\top \\
	 	 T & \mathbf{0}
	 \end{bmatrix},
	 B = \begin{bmatrix}
	 	-I \\ \mathbf{0}
	 \end{bmatrix}, 
	 C = \begin{bmatrix}
	 	I & \mathbf{0}
	 \end{bmatrix}
\end{align*}	 
and $u$ is nonlinearity $\Delta (y)$ added to the systems, given by
\begin{align*}
	u = \Delta(y) := \nabla f(y + x^*) - \nabla f(x^*),
\end{align*}
or equivalently, $u = \nabla f(x) - \nabla (x^*)$, and $y = x - x^*$. Since $f(x)$ is convex and $\nabla f(x)$ is $l$-Lipschitz, it satisfies the co-coercivity\cite{lessard2016analysis}, i.e., 
\begin{align*}
\left( \nabla f(x) - \nabla f(x^*) \right)^\top  (x - x^*)
	\geq \frac{1}{l} \| \nabla f(x) - \nabla f(x^*) \|^2.
\end{align*}
We can obtain the following IQC,
\begin{align}\label{eq:IQC}
	\int_{0}^{T}
	\begin{bmatrix}
		y(t) \\ u (t)
	\end{bmatrix}^\top 
	\Pi
	\begin{bmatrix}
		y (t) \\ u(t)
	\end{bmatrix}
	d t \geq 0, ~ 
\Pi = \begin{bmatrix}
		0 & l \\
	  l  & -2
	\end{bmatrix} \otimes I_{n}
\end{align}
for all $T \geq 0$.
According to \cite{hu2016exponential}, the closed-loop system \eqref{eq:error system general} with the nonlinearity $\Delta(y)$ that satisfies \eqref{eq:IQC} is $\rho$-exponential convergent, i.e., $\|z (t) - z^*\| \leq c e^{-\rho t} \|z(0) - z^*||$, for some $c \geq 0$, and $\rho > 0$ if there exists a $P \succ 0$ such that,
\begin{align}\label{eq:iqc LMI}
	\begin{bmatrix}
	A_{\rho}^\top  P + P A_{\rho} & PB\\
	B^\top  P & \mathbf{0}
\end{bmatrix}
+ \begin{bmatrix}
	C &\mathbf{0}\\ \mathbf{0} & I
\end{bmatrix}^\top 
\Pi
\begin{bmatrix}
	C & \mathbf{0} \\ \mathbf{0} & I
\end{bmatrix} \preceq 0
\end{align}
where $A_{\rho} = A + \rho I$.
By the KYP lemma \cite{rantzer1996kalman}, an equivalent frequency domain inequality for \eqref{eq:iqc LMI} can be obtained by
\begin{align}\label{eq:KYP}
\begin{bmatrix}
	G_{\rho}(- j\omega) \\ I
\end{bmatrix}^{\top}
\Pi
\begin{bmatrix}
	G_{\rho}(j\omega) \\ I
\end{bmatrix}
\preceq 0, ~ \text{for all } \omega \in \mathbb{R}
\end{align}
where $G_{\rho}(j\omega) = C (j\omega I - A_{\rho})^{-1} B$.
Substituting $\Pi$ of \eqref{eq:IQC} into \eqref{eq:KYP}, we obtain the condition 
\begin{align}\label{eq:IFP}
- G_{\rho}(j\omega) - G_{\rho}^\top ( - j\omega)
 \succeq - \frac{2}{l} I, ~ \text{for all } \omega \in \mathbb{R}
\end{align}
which is equivalent to 
system $(A_{\rho},B,-C)$ being input feedforward passive (IFP) with index $- \frac{2}{l}$ \cite{Khalil2002,li2022parallel}.
Select the storage function 
\begin{align*}
V_s = \frac{1}{2} \|x - x^* \|^2 + \frac{1}{2} \|\lambda - \lambda^* \|^2 \geq 0, 
\end{align*}
the derivative along the system $(A,B,-C)$ gives
\begin{align*}
\dot{V}_s = - x^\top  T^\top  T x + y^\top  u,
\end{align*}
meaning that the system $(A,B,-C)$ is passive, and $- G(j\omega) - G^\top ( - j\omega) \succeq 0$, for all $\omega \in \mathbb{R}$ \cite{Khalil2002}.


Since $A$ is Hurwitz by Lemma~\ref{lem:Hurwitz}, $G(s)$ has no poles for $\textup{Re} (s) > - \varepsilon$, i.e., it is analytic in the open vertical strip
\begin{align*}
\left\{ s \in \mathbb{C}  \mid \textup{Re} (s) > - \varepsilon \right\}
\end{align*}
for some $\varepsilon > 0$. Then $G (- s)$ is analytic for $\textup{Re} (-s) > - \varepsilon$. This implies that $G^{\top} (-s)$ is analytic for $\textup{Re} (s) < \varepsilon$. Therefore, the sum $G(s) + G^{\top} (-s)$ is analytic in the open vertical strip
\begin{align*}
\left\{ s \in \mathbb{C} \mid - \varepsilon < \textup{Re} (s)  < \varepsilon \right\}.
\end{align*}
Recall that $G_{\rho}(j \omega ) = G( j \omega - \rho)$, there exists a sufficiently small $\rho > 0$ such that $G_{\rho}(j \omega ) + G_{\rho}^{\top} (- j\omega)$ is analytic for all $\omega \in \mathbb{R}$.
As it remains analytic as $\rho$ decreases to zero, and
\begin{align*}
-\lim_{\rho \rightarrow 0} 
\left( G_{\rho}( j \omega) + G_{\rho}^\top ( - j \omega) \right) = - G(j\omega) - G^\top ( - j\omega) \succeq 0, 
\end{align*}
for all $\omega \in \mathbb{R}$, we conclude that there exist sufficiently small $\delta \geq 0$ and $\rho (\delta) > 0$ such that \eqref{eq:IFP} holds, i.e.,
\begin{align*}
- G_{\rho(\delta)}( j \omega) - G_{\rho(\delta)}^\top ( - j \omega)  \succeq  - \delta I  \succeq - \frac{2}{l} I, ~\text{for all }\omega \in \mathbb{R}.
\end{align*}
Then, the system is $\rho(\delta)$-exponentially stable.\\
%
%
%
%
\textbf{Step 2}: when $m < n$:\\
The matrix $A$ has $(n-m)$ zero eigenvalues. We introduce a coordinate transformation and adopt Assumption~\ref{assumption partially strongly convex} to guarantee exponential convergence.
Let $z = \begin{bmatrix}Q & \mathbf{0}\\ \mathbf{0} & I \end{bmatrix}z'$. Then, system \eqref{eq:error system general} becomes
\begin{align}\label{eq:error system reduced}
\dot{z}' = A' z' + B' u, \quad y = C' z', \quad u = \Delta(y)
\end{align}
where
\begin{align*}
A'
= & \begin{bmatrix}
	 	 - \alpha Q^\top  T^\top  T Q & -Q^\top  T^\top \\
	 	 T Q &  \mathbf{0} 
	 \end{bmatrix} \nonumber \\
= & \begin{bmatrix}
	 	 -\alpha R_1 R_1^\top & \mathbf{0} & - R_1\\
	 	 \mathbf{0} & \mathbf{0} & \mathbf{0}\\
	 	 R_1^\top  & \mathbf{0} & \mathbf{0}
	 \end{bmatrix} \in 
	\mathbb{R}^{(n+m) \times (n+m)},\\
B' 
  = & 
	\begin{bmatrix}
	 	- Q^\top  \\ \mathbf{0}
	 \end{bmatrix} \in \mathbb{R}^{(n+m)\times n},
 ~ C'
= 
   \begin{bmatrix}
	 	Q & \mathbf{0}
	 \end{bmatrix} \in \mathbb{R}^{n \times (n+m)}.
\end{align*}
Notice that $A'$ is not Hurwitz. Nevertheless, recall from Assumption~\ref{assumption partially strongly convex} that $g(x') = f(Qx')$ is $\mu$-partially strongly convex with respect to
the partitioned vector $x'_{S}$.
Then, \eqref{eq:error system reduced} can be rewritten as
\begin{align}\label{error system new}
& \dot{z}' = \mathcal{A} z' + B' u', ~~y' = C' z', ~~ u' = \Delta'(y')
\end{align}
where
\begin{align*}
& \mathcal{A} = 
	 \begin{bmatrix}
	 	 - \alpha R_1R_1^\top  & \mathbf{0} & -  R_1\\
	 	 \mathbf{0} & -\mu I_{m} &  \mathbf{0} \\
	 	 R_1^\top  & \mathbf{0} & \mathbf{0}
	 \end{bmatrix}\\
& \Delta'
 =	 	\nabla g(y') - \nabla g (x'^*)
	 - \mu Q \begin{bmatrix}
	 \mathbf{0}_{ (n - m) \times 1} \\ 
  \left(y' - x'^*\right)_{S}
	 \end{bmatrix}
\end{align*}
It can be similarly shown that \eqref{error system new} is passive. Since an orthogonal linear transformation does not change the Lipschitz constant \cite{zhang2015restricted}, by Assumptions~\ref{assumption general convex}, \ref{assumption differentiable lips} and \ref{assumption partially strongly convex}, $\Delta'$ also satisfies the same IQC \eqref{eq:IQC} with respect to $y'$ and $u'$. In addition, $\mathcal{A}$ is Hurwitz by Lemma~\ref{lem:Hurwitz}. Thus, following similar arguments to the previous step, exponential convergence is guaranteed.
\end{proof}
\begin{remark}
The proof above illustrates the convenience of incorporating a frequency-domain approach in the analysis, as it relies solely on a quadratic storage function rather than an explicit Lyapunov function to prove exponential stability. 
Specifically, we show that the Hurwitz property of $G(s)$ ensures that $G(j \omega) + G^{\top} (- j\omega)$ remains analytic under a small real shift. With this small perturbation, the system still preserves the essential IFP property needed for exponential stability.
However, obtaining a tighter convergence rate may require the use of a more involved IQC than \eqref{eq:IQC}, which is out of the scope of this paper \cite{lessard2016analysis,li2025convergence,li2024generalization}.
\end{remark}

The frequency-domain analysis in this work can be readily applied to distributed optimization as a special case to relax the convexity requirement to restricted secant inequality constraints, as shown in the next section.

\begin{remark}
It is worth noting that \cite{chen2020exponential} considers the problem
	\begin{align*}
		 & \min_{x \in \mathbb{R}^{n}, y \in \mathbb{R}^{m}} f(x) + g(y) \\		
		& \text{subject to~} A x + B y = d
	\end{align*}
where $A \in \mathbb{R}^{k \times n}$ is of full row rank.
This can be regarded as a special case of our method since we include a wider class of functions where the variables can be coupled.
\end{remark}

\section{Applications to Distributed Optimization}
We apply the analysis of the equality-constrained problem in the previous section to distributed optimization to relax the requirement of strong convexity.
The distributed optimization problem is
\begin{align*}
\min_{x \in \mathbb{R}^{n}} \sum_{i = 1}^{N}  f_i (x)
\end{align*}
which can be rewritten as the standard form
\begin{equation}\label{eq:distributed optimization problem}
\begin{aligned}
	& \min_{x_i \in \mathbb{R}^{n}, ~ i = 1, \ldots, N} \sum_{i = 1}^{N}  f_i (x_i)  \\
		& \qquad \text{subject to } \boldsymbol{L} x = \mathbf{0}
\end{aligned}
\end{equation}
where $x = [x_1^\top,\ldots, x_n^\top]^\top$, $\boldsymbol{L} = L \otimes I_n$, and $L$ is the Laplacian matrix for the communication graph among a group of $N$ agents.
\begin{assumption}\label{assumption graph}
	The communication graph is undirected and connected, i.e., $L = L^\top  \succeq 0$ and $0$ is its simple eigenvalue, with the associated eigenvector $\mathbf{1}_{N}$.
\end{assumption}
Under this assumption, \eqref{eq:distributed optimization problem} imposes that $x_i = x_k$, for all $i, k$, recovering the original distributed optimization problem. More on the distributed settings can be found in \cite{nagahara2016control, yang2019survey} and references therein.

\subsection{Relaxing local strong convexity}
Considering the penalty term $\frac{\alpha}{2}x^\top  \boldsymbol{L}x$ instead of $\frac{\alpha}{2}\| \boldsymbol{L} x\|^2$, the augmented primal-dual algorithm gives,
\begin{equation}\label{eq:distributed algorithm}
\begin{aligned}
	\dot{x} = & -\nabla f(x) - \alpha \boldsymbol{L} x - \boldsymbol{L} \lambda\\
	\dot{\lambda} = & \boldsymbol{L} x
\end{aligned}
\end{equation}
which is the widely studied distributed proportional-integral (PI) algorithms \cite{gharesifard2013distributed}. Though $\boldsymbol{L}$ is not a full rank matrix, we can introduce a linear transformation to eliminate its zero eigenvalues.
Define $Q = \begin{bmatrix} Q_1 & Q_2 \end{bmatrix}$ where $Q_1 \in \mathbb{R}^{ N \times (N-1)}$ and $Q_2 \in \mathbb{R}^{ N \times 1}$, and $Q_2 = \dfrac{1}{\sqrt{N}}\mathbf{1}_{N}$ is the normalized eigenvector corresponding to the zero eigenvalue of $L$.
It follows that $Q^\top  L Q = \begin{bmatrix} \Lambda & 0 \\ 0 & 0 \end{bmatrix}$, where $\Lambda \in \mathbb{R}^{(N-1) \times (N-1)}$ is a positive definite matrix.
Let $x = \mathbf{Q} x'$ with $\mathbf{Q} = Q \otimes I_n$, then, problem \eqref{eq:distributed optimization problem} becomes
\begin{align*}
&	\min_{x'_i \in \mathbb{R}^{n}, ~ i = 1, \ldots, N} \sum_{i = 1}^{N}  f_i ( \sum_{j = 1}^{N} Q_{ij} x_j'),  \nonumber \\
&		\text{subject to }  
		\begin{bmatrix}
		\mathbf{\Lambda}  & \mathbf{0}_{(N-1)n \times n}
		\end{bmatrix} x'  =: \mathbf{T} x' = \mathbf{0}
\end{align*}
where $\mathbf{\Lambda} = \Lambda \otimes I_n$
and $\mathbf{T}  \in \mathbb{R}^{(N-1)n \times Nn}$ is of full row rank since $\Lambda$ is positive definite.
Then, the analysis in the previous section can be applied to this problem.
Recall that Assumption~\ref{assumption partially strongly convex} requires $f(x)$ to be partially strongly convex only in the subspace that satisfies the equality constraint, i.e., when consensus is achieved among agents. 
Therefore, Assumption~\ref{assumption partially strongly convex} is equivalent to saying that the sum $f (x) = \sum_{i = 1}^{N} f_i(x)$ is strongly convex, which is a weaker condition than requiring strong convexity on local objective functions \cite{li2020input}. 
\begin{corollary}
Under Assumptions~\ref{assumption general convex}, \ref{assumption differentiable lips}, \ref{assumption partially strongly convex}, \ref{assumption feasible problem}, and \ref{assumption graph}, $f (x) = \sum_{i = 1}^{N} f_i(x)$ is strongly convex and the variable $x(t)$ in the distributed algorithm \eqref{eq:distributed algorithm} exponentially converges to the optimal solution of problem \eqref{eq:distributed optimization problem}.
\end{corollary}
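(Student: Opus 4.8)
The plan is to recognize that, after the orthogonal change of variables $x=\mathbf{Q}x'$, the distributed problem \eqref{eq:distributed optimization problem} with algorithm \eqref{eq:distributed algorithm} is precisely an instance of the equality-constrained problem \eqref{eq:problem} with its augmented primal-dual flow \eqref{eq:primal-dual algorithm}, so that Theorem~\ref{thm exponential convergence} can be invoked. First I would confirm that the transformation carries \eqref{eq:distributed algorithm} onto the augmented primal-dual dynamics of the reduced problem with objective $g(x')=\sum_{i=1}^{N}f_i\big(\sum_{j=1}^{N}Q_{ij}x'_j\big)$ and constraint $Tx'=\mathbf{0}$, $T=[\mathbf{\Lambda}\ \ \mathbf{0}]$. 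Since $\mathbf{Q}$ is orthogonal and $Q^{\top}LQ=\mathrm{diag}(\Lambda,0)$ with $\Lambda\succ0$, the matrix $T$ has full row rank, so Assumption~\ref{assumption constraint qualification} holds; convexity (Assumption~\ref{assumption general convex}) and the Lipschitz constant (Assumption~\ref{assumption differentiable lips}) are preserved by the orthogonal map, and feasibility (Assumption~\ref{assumption feasible problem}) is inherited.

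The decisive step is to read Assumption~\ref{assumption partially strongly convex} as a condition on $f=\sum_i f_i$. Under Assumption~\ref{assumption graph} the feasible subspace $\{x':Tx'=\mathbf{0}\}$ is exactly the consensus subspace, spanned by $Q_2\otimes I_n=\tfrac{1}{\sqrt{N}}(\mathbf{1}_N\otimes I_n)$, and the partitioned coordinate $x'_S$ is the component along it, with $x'_S=\sqrt{N}\,\bar{x}$ where $\bar{x}=\tfrac{1}{N}\sum_i x_i$. On this subspace all agents agree, $x_i=\bar{x}$, and $g$ collapses to $f(\bar{x})$ up to the scaling $x'_S=\sqrt{N}\,\bar{x}$; hence $\mu$-strong convexity of $g$ along $x'_S$ is equivalent to strong convexity of $f=\sum_i f_i$, with $\mu=\sigma/N$ when $f$ is $\sigma$-strongly convex. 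This simultaneously establishes the first assertion of the corollary and supplies Assumption~\ref{assumption partially strongly convex} for the reduced problem.

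With every hypothesis of Theorem~\ref{thm exponential convergence} in place, I would apply it directly: the reduced error system satisfies the same co-coercivity IQC \eqref{eq:IQC}, and the Step-2 construction extracts a $-\mu I$ block along $x'_S$ so that the resulting matrix is Hurwitz by Lemma~\ref{lem:Hurwitz}, giving $\rho$-exponential convergence of $x'(t)$ and therefore of $x(t)=\mathbf{Q}x'(t)$. The point demanding the most care --- and the main obstacle --- is that \eqref{eq:distributed algorithm} uses the penalty $\tfrac{\alpha}{2}x^{\top}\boldsymbol{L}x$ rather than $\tfrac{\alpha}{2}\|\boldsymbol{L}x\|^2$, so the penalty enters the reduced primal dynamics as the block $\alpha\,\mathrm{diag}(\mathbf{\Lambda},\mathbf{0})$, positive definite on the constrained coordinates $x'_{S^c}$ and zero on the consensus coordinates $x'_S$, instead of the $\alpha R_1R_1^{\top}$-type block appearing in the proof of Theorem~\ref{thm exponential convergence}. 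I would verify that this change is benign: combined with the $-\mu I$ term extracted along $x'_S$, the effective damping $\mathrm{diag}(\alpha\mathbf{\Lambda},\mu I)$ is positive definite, so Lemma~\ref{lem:Hurwitz} again produces a Hurwitz matrix and the IQC/KYP argument of Theorem~\ref{thm exponential convergence} carries over unchanged.
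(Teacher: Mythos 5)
Your proposal is correct and follows essentially the same route as the paper, which justifies this corollary by the transformation $x=\mathbf{Q}x'$, the identification of Assumption~\ref{assumption partially strongly convex} with strong convexity of $\sum_i f_i$ along the consensus direction, and a direct appeal to Theorem~\ref{thm exponential convergence}. In fact you are somewhat more careful than the paper, which silently glosses over the point you flag explicitly --- that the penalty $\tfrac{\alpha}{2}x^{\top}\boldsymbol{L}x$ yields the damping block $\alpha\mathbf{\Lambda}$ rather than an $\alpha R_1R_1^{\top}$-type block, which is harmless since positive definiteness on the constrained coordinates is all that Lemma~\ref{lem:Hurwitz} requires.
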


\subsection{Relaxing global strong convexity}
In addition to relaxing local strong convexity, we notice the global strong convexity requirement can also be relaxed to the restricted secant inequality (RSI) condition \cite{zhang2015restricted}, i.e., for any $x \in \mathbb{R}^{n}$,
\begin{align}\label{eq:RSI}
\left( \nabla f(x) - \nabla f(x_p) \right)^\top  (x - x_p) \geq \mu \|x - x_p \|^2
\end{align}
where $x_p$ is the projection of $x$ into the solution set $\mathcal{X}^*$ of problem \eqref{eq:distributed optimization problem}, and $\nabla f(x_p) = \mathbf{0}$. This condition is also considered in \cite{yi2020exponential}.
Here, we assume the following.
\begin{assumption}\label{assumption RSI}
	The global objective function $\sum_{i} f_i (x)$ satisfies the restricted secant inequality (RSI) condition \eqref{eq:RSI} with $\mu > 0$ and has a unique finite minimizer.
\end{assumption}
Note that this assumption does not even imply convexity \cite{zhang2015restricted}.
By Assumption~\ref{assumption RSI} and Assumption~\ref{assumption differentiable lips}, we directly have
\begin{align}
 \left( \nabla f(x) - \nabla f(x_p) \right)^\top (x - x_p) 
 \geq \mu \|x - x_p \|^2 \nonumber\\
  \geq \frac{\mu}{l^2} \| \nabla f(x) - \nabla f(x_p) \|^2.
\end{align}
The IQC \eqref{eq:IQC} holds with $l$ changed to $\frac{l^2}{\mu}$. We provide in the following a tighter bound on the co-coercivity parameter.
\begin{lemma}
	Under Assumption~\ref{assumption RSI}, $f(x)$ satisfies the co-coercivity
\begin{align}\label{eq:rsi co-coercivity}
	\left( \nabla f(x) - \nabla f(x_p) \right)^\top  (x - x_p)
	 \geq \frac{1}{l} \| \nabla f(x) - \nabla f(x_p) \|^2.
\end{align}
\end{lemma}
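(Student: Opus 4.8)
The plan is to reproduce, under the restricted secant inequality, the two ingredients that combine to give the sharp co-coercivity constant $1/l$ in the convex case \eqref{eq:co-coercivity}, thereby avoiding the lossy constant $\mu/l^2$ obtained from the naive chaining. First I would record the structural facts from Assumption~\ref{assumption RSI}: since $x_p$ is the projection of $x$ onto the solution set and the minimizer is unique, $x_p$ is in fact the global minimizer, so $\nabla f(x_p)=\mathbf{0}$ and $f(x_p)=\min f$. The target \eqref{eq:rsi co-coercivity} then reduces to establishing $\nabla f(x)^\top (x-x_p)\geq \tfrac1l\|\nabla f(x)\|^2$.

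The first ingredient is a Polyak--\L{}ojasiewicz-type inequality that uses only Lipschitz smoothness. Integrating Assumption~\ref{assumption differentiable lips} along the segment between two points yields the descent lemma $f(y)\leq f(x)+\nabla f(x)^\top (y-x)+\tfrac{l}{2}\|y-x\|^2$; evaluating it at the gradient-step point $y=x-\tfrac1l\nabla f(x)$ and using $f(y)\geq f(x_p)$ gives $f(x)-f(x_p)\geq \tfrac{1}{2l}\|\nabla f(x)\|^2$. This step is routine and needs no convexity.

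The second ingredient is the transport inequality $\nabla f(x)^\top (x-x_p)\geq \bigl(f(x)-f(x_p)\bigr)+\tfrac{1}{2l}\|\nabla f(x)\|^2$; adding it to the Polyak--\L{}ojasiewicz bound immediately yields the claimed constant $\tfrac1l$. I expect this to be the main obstacle. In the convex setting it is the standard consequence of smoothness, proved by observing that $z\mapsto f(z)-\nabla f(x)^\top z$ attains its global minimum at $z=x$; RSI provides no such global-minimizer property, so the plan is to replace convexity by the RSI inequality itself, which guarantees that $\nabla f(x)$ is positively aligned with $x-x_p$, and to combine it with the descent lemma applied along the segment $[x_p,x]$ so as to control the gap between $\nabla f(x)^\top(x-x_p)$ and $f(x)-f(x_p)$. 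Making this substitution quantitatively sharp enough to recover the full $\tfrac{1}{2l}$ slack, rather than a weaker directional estimate that only sees the component of the gradient along $x-x_p$ and hence reproduces the crude $\mu/l^2$ bound, is the delicate part of the argument.
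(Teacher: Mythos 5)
Your decomposition is exactly the one the paper uses: reduce to $\nabla f(x)^\top(x-x_p)\geq \tfrac1l\|\nabla f(x)\|^2$, obtain the Polyak--\L{}ojasiewicz-type bound $f(x)-f(x_p)\geq\tfrac{1}{2l}\|\nabla f(x)\|^2$ from the descent lemma (this part is fine and needs no convexity), and then add a ``transport'' inequality $\nabla f(x)^\top(x-x_p)\geq f(x)-f(x_p)+\tfrac{1}{2l}\|\nabla f(x)\|^2$. The problem is that you never prove the second ingredient; you only describe a plan for it and correctly label it the delicate part. As written, the proposal therefore has a genuine gap at precisely the step where all of the content lies. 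The substitute you sketch --- combining the RSI inequality with the descent lemma along the segment $[x_p,x]$ --- does not close it: RSI bounds $\nabla f(x)^\top(x-x_p)$ from below only by $\mu\|x-x_p\|^2$, and every conversion of $\|x-x_p\|$ into $\|\nabla f(x)\|$ via the Lipschitz bound $\|\nabla f(x)\|\leq l\|x-x_p\|$ reintroduces the factor $\mu/l^2$ you are trying to avoid. Nothing in your outline explains how to recover the full $\tfrac{1}{2l}\|\nabla f(x)\|^2$ slack, so the claimed constant $1/l$ is not established.

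For comparison, the paper's proof takes the same route but compresses the missing step into a single assertion: it defines $\phi(z)=f(z)-\nabla f(x_0)^\top z$ for a fixed $x_0$, states that by Assumption~\ref{assumption RSI} this $\phi$ attains its global minimum at $z=x_0$, and then invokes the chain of inequalities from Nesterov's Theorem~2.1.5. That global-minimizer property of $\phi$ (for $x_0=x$ an arbitrary point, not merely for $x_0=x_p$, where it is trivial because $\nabla f(x_p)=\mathbf{0}$ makes $\phi=f$) is exactly your transport inequality in disguise; it is immediate under convexity but does not follow from RSI alone, since RSI constrains $f$ only relative to the solution set and says nothing about linear perturbations $f(z)-v^\top z$. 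So you have correctly located the crux --- and located it more honestly than the paper does --- but neither your proposal nor the paper's one-line justification actually supplies the argument. To complete the proof you would need either to show that RSI forces $z\mapsto f(z)-\nabla f(x)^\top z$ to be globally minimized at $z=x$, or to derive the transport inequality by some independent means; your outline does neither.
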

\begin{proof}
Define $\phi(x) = f(x) - \nabla f(x_0)^\top  x$, for a fixed $x_0 \in \mathbb{R}^n$. It is clear that $\phi(x)$ has $l$-Lipschitz gradient. By Assumption~\ref{assumption RSI}, $\phi (x)$ has a global optimal point $x = x_0$. Then, following similar arguments from \cite[Theorem~2.1.5]{nesterov2003introductory}, we conclude \eqref{eq:rsi co-coercivity}.
\end{proof}
In addition, as $\nabla f(x^*) = \mathbf{0}$, we obtain $x^* = x_p$ by Assumption~\ref{assumption RSI}. Therefore, exponential convergence to the optimal solution is guaranteed.
\begin{corollary}
Under Assumptions~\ref{assumption differentiable lips}, \ref{assumption graph} and \ref{assumption RSI}, the variable $x(t)$ in the distributed algorithm \eqref{eq:distributed algorithm} exponentially converges to the optimal solution of problem \eqref{eq:distributed optimization problem}.
\end{corollary}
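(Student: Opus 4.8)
The plan is to recast \eqref{eq:distributed algorithm} into the equality-constrained template already obtained in this section and then reuse, almost verbatim, the IQC and KYP argument from the proof of Theorem~\ref{thm exponential convergence}, with the RSI-based co-coercivity taking the place of the convexity-based one. After the transformation $x=\mathbf{Q}x'$ the constraint reads $Tx'=\mathbf{0}$ with $T=[\mathbf{\Lambda}\ \mathbf{0}]$ of full row rank, so the only coordinates left undetermined by the constraint are the $n$ consensus coordinates, which play exactly the role of the partition $x'_{S}$ in Step~2 of that proof. First I would write the error system in these coordinates and isolate the consensus block as the zero block that must be ``repaired'' to obtain a Hurwitz system matrix.

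The second step is to certify the IQC \eqref{eq:IQC}. Convexity (Assumption~\ref{assumption general convex}) is unavailable here, but the preceding Lemma shows that Assumptions~\ref{assumption differentiable lips} and \ref{assumption RSI} already yield the same co-coercivity \eqref{eq:rsi co-coercivity} with parameter $1/l$ about the fixed point. Since $\nabla f(x^*)=\mathbf{0}$ and $x^*=x_p$ by Assumption~\ref{assumption RSI}, the nonlinearity $\Delta(y)=\nabla f(y+x^*)-\nabla f(x^*)$ obeys $u^\top y\ge \tfrac{1}{l}\|u\|^2$ pointwise, so the integral inequality \eqref{eq:IQC} holds with the same multiplier $\Pi$, and the storage function $V_s$ certifies passivity of the forward system exactly as before.

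The third step replaces the role of partial strong convexity (Assumption~\ref{assumption partially strongly convex}) by the RSI inequality \eqref{eq:RSI} itself, which gives the stronger, full-space bound $u^\top y\ge \mu\|y\|^2\ge\mu\|y_{S}\|^2$ about $x^*$; this is precisely the inequality that Step~2 of Theorem~\ref{thm exponential convergence} extracted from Assumption~\ref{assumption partially strongly convex}. I would therefore split $\Delta$ along the consensus block, absorb the resulting $-\mu I$ into the consensus diagonal of the system matrix, and thereby turn the zero block into $-\mu I$, matching the Hurwitz template of Lemma~\ref{lem:Hurwitz} with $F\succ0$ and $T$ of full row rank. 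Passing to $\mathcal{A}+\rho I$ and running the KYP/IFP passivity argument for a sufficiently small $\rho>0$ then delivers $\rho$-exponential convergence of $x$.

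The main obstacle I expect is twofold. First, because RSI only controls the gradient relative to the fixed point and need not hold between arbitrary pairs, I must confirm that the leftover nonlinearity obtained after peeling off $\mu\,(\cdot)_{S}$ still satisfies the co-coercive IQC rather than merely being pointwise monotone; the cleanest route is to reuse the device of the preceding Lemma, namely subtracting an affine term so that the residual attains a global minimum at $x^*$ and then invoking the co-coercivity-at-optimum estimate \cite[Theorem~2.1.5]{nesterov2003introductory}. Second, the Laplacian $\boldsymbol{L}$ is rank-deficient, so the component of the dual variable lying in $\operatorname{span}(\mathbf{1})$ is conserved and uncontrollable; I would carry out the analysis on the reduced, genuinely Hurwitz subsystem and let this constant dual mode sit harmlessly outside it, which is why exponential convergence is asserted for $x(t)$ alone, as in the statement.
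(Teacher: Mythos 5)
Your proposal is correct and follows essentially the same route as the paper, which itself omits the proof with the remark that the analysis of Theorem~\ref{thm exponential convergence} carries over once the RSI-based co-coercivity \eqref{eq:rsi co-coercivity} and the identification $x^*=x_p$ replace the convexity-based estimates. You in fact fill in two details the paper leaves implicit (the co-coercivity of the residual nonlinearity after peeling off the $\mu$-term, and the conserved dual mode along $\operatorname{span}(\mathbf{1})$), and your proposed resolutions are consistent with the paper's argument.
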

The proof is omitted as the analysis in the proof of Theorem~\ref{thm exponential convergence} remains intact.

This generalization broadens the applicability of exponential convergence results to problems that do not satisfy convex conditions. Our result coincides with \cite{yi2020exponential} and is equivalent to the EB condition in \cite{liang2019exponential} under the convexity assumption \cite{karimi2016linear}.

\section{Conclusion}
This paper established the global exponential convergence of augmented primal-dual gradient algorithms for optimization problems with partially strongly convex functions. The globally strong convexity of the objective function can be relaxed to strong convexity only in the subspace defined by the equality constraints, provided that the global Lipschitz condition for the gradient is satisfied. This relaxation allows the algorithm to achieve exponential convergence even when the objective function is merely convex outside this subspace.
Future work could further extend these results to optimization problems with inequality constraints.


\bibliographystyle{IEEEtran}
\bibliography{References}
\end{document}